\newtheorem{example}[theorem]{Example}
\begin{document}

%  Leave these commented lines here
% \input{elaheader-volx-xx.tex}
% \setcounter{page}{1}

% \renewcommand{\thefootnote}{\fnsymbol{footnote}}
% \renewcommand{\thefootnote}{\arabic{footnote}}
% \renewcommand{\theequation}{\thesection.\arabic{equation}}

\bibliographystyle{plain}
\title{
A Short Note on Doubly Substochastic Analog of Birkhoff's Theorem}
% Leave blank; editors will write the exact dates above

\author{
Lei Cao\thanks{Department of Mathematics,
Georgian Court University, Lakewood, New Jersey, 08701, USA
(lcao@georgian.edu).}
% Remember to put \and between any two authors
}
% Note that \footnotemark[3]} is used for the third author
% because of the same affiliation for the second and third authors.
% If the same affiliation is to be used for the first and second authors,
% \footnotemark[2] should be used instead of \thanks{} for the second author.

% Authors and running title to go on top of each page
\pagestyle{myheadings}
%\markboth{D.B.\ Szyld, D.\ Hershkowitz, and M.\ Goldberg}{Sample Paper}
\maketitle

\begin{abstract}

Let $B$ be an $n\times n$ doubly substochastic matrix. % and $A$ be the binary matrix corresponding to $B.$%
We show that $B$ can be written as a convex combination of no more than $\sigma(B)+t$ subpermutation matrices, where $\sigma(B)$ is the number of nonzero elements in $B$ and $t$ is the number of fully indecomposable components of $B^{comp},$ the minimal doubly stochastic completion of $B$ obtained by a specific way.
\end{abstract}

\begin{keywords}
Doubly Substochastic Matrices; Birkhoff-von Neumann Theorem; Subpermutation Matrices;
\end{keywords}
\begin{AMS}
15B51,  15A99.
\end{AMS}

%%%%%%%%%%%%%%%%%%%%%%%%%%%%%%%%%%%%%%%%%%%%%%%%%%%%%%%%%%%%%
\section{Introduction} \label{intro-sec}
Let $\Omega_n$ be the set all of $n\times n$ doubly stochastic matrices. It is well-known that $\Omega_n$ is a polytope and whose dimension is $(n-1)^2$ and whose extreme points are the $n\times n$ permutation matrices (see \cite{Survey1964} II.1, \cite{BG}). Let $A=[a_{ij}]$ be an $n\times n$ (0,1) matrix. Define the face of $\Omega_n$ corresponding to $A$, denoted by $\mathcal{F}(A)$, as the set of all $n\times n$ doubly stochastic matrices $X=[x_{ij}]$ such that $x_{ij}\leq a_{ij}$ for all $i,j=1,2,\ldots, n.$  In \cite{BRU1977}, Brualdi and Gibson show that the dimension of $\mathcal{F}(A)$ is $\sigma(A)-2n+t$ where $\sigma(A)$ is the number of $1$'s in $A$ and $t$ is the number of fully indecomposable components of $A.$ Therefore by Carath\'{e}odory's theorem, every doubly stochastic matrix in $\mathcal{F}(A)$ can be written as a convex combination of no more than $\sigma(A)-2n+t+1$ permutation matrices. Recently we note that Dufoss\'{e} and U\c{c}ar showed that determining the minimal number of permutation matrices needed is strongly NP-complete (\cite{DU2016}).

Let $\omega_n$ be the set of all $n\times n$ doubly substochastic matrices. It is known that $\omega_n$ is a convex polytope as well and its vertices are the $n\times n$  subpermutation matrices (\cite{MatrixAnalysis}, p. 165). However, there is no upper bound or the number of subpermutation matrices needed for a given doubly substochastic matrix is known. In this paper, we give an upper bound via a combination of the results from \cite{BRU1977} and \cite{CK2016}.

In \cite{CK2016}, Cao, Koyuncu and Parmer defined the sub-defect of doubly substochastic matrices and provided a specific way to obtain a minimal doubly stochastic completion of any doubly substochastic matrix. Such a minimal completion contains the most zeros among all completions and hence minimizes the number of nonzero elements which is the most influential part of the dimension of the face containing its completion.

\section{Properties and Notation}

\begin{definition} Let $B\in \omega_n.$ Then the {\bf sub-defect} of $B,$ denoted by $sd(B),$ is defined to be the smallest integer $k$ such that there exists $A\in\Omega_{n+k}$ containing $B$ as a submatrix.
\end{definition}

Let $M$ be an $n\times n$ matrix. Denote  the sum of all elements of $M$ by $\sigma(M),$  the sum of the $i$th row of $M$ by $r_i(M)$ and the sum of the $i$th column of $M$ by $c_i(M).$  Then $sd(B)$ can be computed easily by taking the difference between the size of $B$ and $\sigma(B)$ as the follows.

\begin{theorem} (\cite{CK2016}, Theorem 2.1 \label{thm2.1}) Let $B\in \omega_n$. Then $$sd(B)=\lceil n-\sigma(B)\rceil,$$  where $\lceil \cdot \rceil$ is the ceiling function.
\end{theorem}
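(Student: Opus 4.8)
The plan is to establish the two inequalities $sd(B)\le \lceil n-\sigma(B)\rceil$ and $sd(B)\ge \lceil n-\sigma(B)\rceil$ separately. Since permuting rows and columns of a doubly stochastic matrix again yields a doubly stochastic matrix, I may assume throughout that whenever $A\in\Omega_{n+k}$ contains $B$ as a submatrix, $B$ occupies the leading $n\times n$ block, so that $A=\begin{pmatrix}B&C\\ D&E\end{pmatrix}$ with $C$ of size $n\times k$, $D$ of size $k\times n$, $E$ of size $k\times k$, all entrywise nonnegative.

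For the lower bound I would begin from such an $A\in\Omega_{n+k}$ and compare partial row and column sums. Equating each of the first $n$ row sums of $A$ with $1$ gives $r_i(C)=1-r_i(B)$, hence $\sigma(C)=n-\sigma(B)$; equating each of the first $n$ column sums with $1$ gives $\sigma(D)=n-\sigma(B)$; and equating the last $k$ row sums with $1$ gives $\sigma(E)=k-\sigma(D)=k-(n-\sigma(B))$. Since $E\ge 0$ we have $\sigma(E)\ge 0$, so $k\ge n-\sigma(B)$, and as $k$ is a nonnegative integer this forces $k\ge\lceil n-\sigma(B)\rceil$. Minimizing over admissible $k$ gives $sd(B)\ge\lceil n-\sigma(B)\rceil$.

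For the upper bound I would exhibit one completion with $k:=\lceil n-\sigma(B)\rceil$. Because $B$ is doubly substochastic, $s:=n-\sigma(B)$ satisfies $0\le s\le k$; if $k=0$ then $\sigma(B)=n$, so $B\in\Omega_n$ already and there is nothing to prove, hence assume $k\ge 1$. Take $C$ nonnegative of size $n\times k$ with row sums $1-r_i(B)$ and (e.g.) every column sum equal to $s/k$; take $D$ nonnegative of size $k\times n$ with column sums $1-c_j(B)$ and every row sum equal to $s/k$; and take $E=(1-s/k)I_k$. For each of the first two matrices the prescribed row-sum total and column-sum total both equal $s$, so a nonnegative matrix with those margins exists; and $s/k$ and $1-s/k$ both lie in $[0,1]$. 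A routine check then shows that every row and every column of $A=\begin{pmatrix}B&C\\ D&E\end{pmatrix}$ sums to $1$, so $A\in\Omega_{n+k}$ and contains $B$, whence $sd(B)\le k=\lceil n-\sigma(B)\rceil$. Combining with the lower bound proves the equality.

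The only point deserving a word of justification is the existence of $C$ and $D$ with the prescribed margins: this is the classical fact that a transportation polytope with equal total supply and demand is nonempty, so there is no real obstacle, only bookkeeping. If a fully explicit completion is wanted — as it is for the canonical completion $B^{comp}$ used later and taken from \cite{CK2016} — one can instead fill the deficiencies $1-r_i(B)$ and $1-c_j(B)$ into the new rows and columns greedily, one unit of mass at a time, in place of spreading them uniformly.
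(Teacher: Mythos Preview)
Your proof is correct. Both inequalities are handled cleanly: the lower bound via the mass-balance identities $\sigma(C)=\sigma(D)=n-\sigma(B)$ and $\sigma(E)=k-(n-\sigma(B))\ge 0$ is exactly the standard argument, and your upper-bound construction with uniform column sums $s/k$ in $C$, uniform row sums $s/k$ in $D$, and $E=(1-s/k)I_k$ checks out (the transportation-polytope justification is adequate here since all prescribed margins are nonnegative with equal totals).

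The paper itself does not prove this theorem; it is quoted from \cite{CK2016}. What the paper \emph{does} present is the specific greedy completion algorithm (fill each appended row and column one at a time, taking $x_i=\min\{1-r_i(B),\,1-\sum_{j<i}x_j\}$, etc.), which amounts to the constructive half of the result and simultaneously produces the canonical completion $B^{comp}$ used in Lemma~\ref{lmMain} and Theorem~\ref{main}. Your uniform-spreading construction is arguably slicker for the bare existence statement, but it does not yield the staircase structure of $X$ and $Y$ that the paper exploits to bound $\sigma(\mathcal{B}(X))$, $\sigma(\mathcal{B}(Y))$, and $\sigma(\mathcal{B}(Z))$; you correctly flag this in your final remark. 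So: same lower bound, different but equally valid upper-bound construction, with the paper's greedy choice being dictated by what comes afterward rather than by the theorem itself.
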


Denote $\omega_{n,k}$ the set of all $n\times n$ doubly stochastic matrices with sub-defect $k.$ Then $\omega_{n,k}$ partition $\omega_n,$  and $\omega_{n,0}=\Omega_n.$ Let $B\in \omega_{n,k},$ then denote $B^{comp}\in \Omega_{n+k}$ the minimal doubly stochastic completion of $B$ obtained by the method above.

Given $B=(b_{ij})\in \omega_{n,k}$ to obtain a minimal doubly stochastic completion of $B,$ one can append rows and columns to $B$ as the following:
\begin{enumerate}
\item Construct
$$\tilde{B}=\begin{pmatrix}b_{11} & b_{12} & \ldots & b_{1n} & x_1 \\ b_{21} & b_{22} & \ldots & b_{2n} & x_2 \\
\vdots & \vdots & \ddots & \vdots & \vdots  \\b_{n1} & b_{n2} & \ldots & b_{nn} & x_n  \\  y_1 & y_2 & \ldots & y_n & z\end{pmatrix}$$

where $$x_1=1-r_1(B),\ \  y_1=1-c_1(B)$$ and $$x_i=\min\{1-r_i(B),1-\sum_{j=1}^{i-1}x_j\}, \ \ y_i=\min\{1-c_i(B),1-\sum_{j=1}^{i-1}y_j\}$$ for $i=2,3,\ldots,n$ and $$z=\min\{1-\sum_{i=1}^n x_i, 1-\sum_{i=1}^n y_i\}.$$

\item If $B$ is doubly stochastic, then one doesn't want to do the construction. If $\tilde{B}$ is a doubly stochastic matrix, then it implies that $sd(B)=1$ and $\tilde{B}$ is a minimal completion. Otherwise, once show that $\tilde{B}\in \omega_{n+1,k+2}$ and repeat the previous step.
\end{enumerate}

This algorithm gives a minimal completion because each row and column appended except the last row and the last column has sum $1$ due to the way to choose $x_i$ and $y_i$ for $i=1,2,\ldots,n$ in step 1 and this is the largest step we can have to complete it.

Another important ingredient is a known formal for the dimension of a face of $\Omega_n.$ A matrix is called a  ${\bf (0,1)}$ matrix or a {\bf binary} matrix if all its entries are either $0$ or $1.$ An $n \times n$ binary matrix $A=[a_{ij}]$ has {\bf total support} if $a_{rs} = 1$ implies there exists an $n\times n$ permutation matrix $P = [p_{ij}]$ with $p_{rs} = 1$ and $P\leq A$ (\cite{sinkhorn1967}). Let $S=[s_{ij}]\in \omega_n,$ then the binary matrix $A$ corresponding to $S,$ denoted by $\mathcal{B}(S),$ is defined as $a_{ij}=1$ if $s_{ij}\neq 0.$ A fact is that for any $S\in \Omega_n,$ the corresponding binary matrix $\mathcal{B}(S)$ has total support due to Birkhoff's theorem. A doubly stochastic matrix $A$ is fully indecomposable, provided there do not exist permutation matrices $P$ and $Q$ such that $PAQ=A_1\oplus A_2$ (\cite{FENNER1971}).
\begin{theorem}  \cite{PM1965} A binary matrix $A$ has total support if and only if there exist permutation matrices $P$ and $Q$ such that $PAQ$ is a direct sum of fully indecomposable matrices.

\end{theorem}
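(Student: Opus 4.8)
The plan is to prove the two implications separately, relying on two standard reformulations. First, total support is preserved under left and right multiplication by permutation matrices, so throughout we may replace $A$ by $PAQ$ freely. Second, if $A = A_1 \oplus \cdots \oplus A_m$ is a direct sum, then any permutation matrix $R \le A$ must itself be block diagonal: each diagonal block $A_i$ is square, so the rows meeting $A_i$ can only be matched to columns meeting $A_i$, forcing $R = R_1 \oplus \cdots \oplus R_m$ with each $R_i \le A_i$. Consequently $A$ has total support if and only if every summand $A_i$ has total support. I will also use the characterization, equivalent to the definition given above, that $A$ of order $n$ is fully indecomposable exactly when it has no $p \times q$ all-zero submatrix with $p + q = n$ and $1 \le p,q \le n-1$; a partial decomposition $PAQ = \begin{pmatrix} A_{11} & A_{12} \\ 0 & A_{22}\end{pmatrix}$ with $A_{11}, A_{22}$ square of positive orders $p,q$ is precisely such a zero block.

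For sufficiency, suppose $PAQ$ is a direct sum of fully indecomposable matrices. By the reduction above it suffices to show that a single fully indecomposable matrix $A$ of order $n \ge 1$ has total support. Fix an entry $a_{rs} = 1$; I must produce a permutation matrix $P \le A$ with $p_{rs} = 1$, equivalently show that the $(n-1)\times(n-1)$ submatrix $A(r\mid s)$ obtained by deleting row $r$ and column $s$ has positive permanent. By the Frobenius--König theorem this fails only if $A(r\mid s)$ contains an $a \times b$ zero submatrix with $a + b = n$; but such a block is an $a \times b$ zero submatrix of $A$ avoiding row $r$ and column $s$ with $a + b = n$, contradicting full indecomposability. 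Hence the permanent is positive and $A$ has total support, the case $n=1$ being the trivial matrix $[1]$.

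For necessity, suppose $A$ has total support; I will induct on the order $n$. If $A$ is fully indecomposable there is nothing to prove. Otherwise $A$ is partly decomposable, so after permutations $B := PAQ = \begin{pmatrix} A_{11} & A_{12} \\ 0 & A_{22}\end{pmatrix}$ with $A_{11}$ of order $p \ge 1$, $A_{22}$ of order $q \ge 1$, and $p + q = n$. The crucial step is to show the off-diagonal block $A_{12}$ is zero, so that $B = A_{11}\oplus A_{22}$ is a genuine direct sum. Since $B$ inherits total support, any $1$ in a position $(i,j)$ with $i \le p < j$ lies on a permutation $R \le B$. As $R \le B$, the $q$ rows indexed $p+1,\ldots,n$ carry their ones only in the $q$ columns indexed $p+1,\ldots,n$; being part of a permutation, these rows occupy all of those columns, so column $j$ is already used by a row below the split and cannot also be used by row $i \le p$, a contradiction. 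Thus $A_{12} = 0$, so $B = A_{11}\oplus A_{22}$, and by the direct-sum reduction each $A_{ii}$ has total support. Applying the induction hypothesis to $A_{11}$ and $A_{22}$ and recombining the resulting permutations block-diagonally expresses $B$, and hence $A$, as a direct sum of fully indecomposable matrices.

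I expect the necessity direction, specifically the vanishing of the off-diagonal block $A_{12}$, to be the main obstacle: it is exactly here that total support, rather than the mere existence of one positive diagonal, is needed, and the counting argument linking a permutation through an entry of $A_{12}$ to the saturation of the lower block must be stated precisely. The remaining care concerns degenerate conventions, namely the base case $n=1$ and the treatment of zero lines or the all-zero matrix, together with citing the Frobenius--König theorem in the exact form used in the sufficiency direction.
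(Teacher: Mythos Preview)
The paper does not supply a proof of this theorem; it is simply quoted from Perfect and Mirsky \cite{PM1965} as a known result, so there is no in-paper argument to compare against. Your proof is correct and follows the standard route: Frobenius--K\"onig for the sufficiency direction, and for necessity the observation that in a block upper-triangular form any permutation dominated by the matrix must already saturate the lower-right block, forcing the off-diagonal block to vanish and enabling induction on the diagonal blocks. The degenerate points you flag (the all-zero matrix and the convention for the $1\times 1$ case) are indeed the only places requiring care, and they do not affect the substance of the argument.
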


Given an $n\times n$ binary matrix $A=[a_{ij}]$ which has total support, the {\bf face} of $\Omega_n$ corresponding to $A,$ denoted by $\mathcal{F}(A),$  is the set of all matrices $S=[s_{ij}]\in \Omega_n$ with $s_{ij}\leq a_{ij}$ for $i,j=1,2,\ldots, n.$

\begin{theorem}\label{BThm}(Corollary 2.6, \cite{BRU1977})  Let A be an $n \times n$  binary matrix with total support, let $P$ and $Q$ be permutation matrices such that $PAQ = A_1\oplus
\cdots\oplus A_t,$
where $A_i$ is a fully indecomposable  matrix for $i = 1,\ldots,t$. Then
$$\dim \mathcal{F}(A) = \sigma(A) - 2n+ t.$$ where $\dim \mathcal{F}(A)$ is the dimension of $\mathcal{F}(A)$.
\end{theorem}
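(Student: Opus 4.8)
The plan is to treat $\mathcal{F}(A)$ as a polytope inside the coordinate subspace $\reals^{E}$ indexed by the set of positions $E=\{(i,j):a_{ij}=1\}$, so that $\dim\reals^{E}=\sigma(A)$ and a matrix $S=[s_{ij}]$ lies in $\mathcal{F}(A)$ exactly when $s_{ij}=0$ for $(i,j)\notin E$, $s_{ij}\ge 0$ for $(i,j)\in E$, and every row sum and every column sum of $S$ equals $1$. Written in the variables $(s_{ij})_{(i,j)\in E}$, those $2n$ line-sum equations cut out an affine flat $F\subseteq\reals^{E}$, nonempty because $A$ has total support and hence contains a permutation matrix, with $\dim F=\sigma(A)-\mathrm{rank}\,C$, where $C$ is the $2n\times\sigma(A)$ coefficient matrix. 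Since $\mathcal{F}(A)=F\cap\reals^{E}_{\ge 0}$, this already gives $\dim\mathcal{F}(A)\le\sigma(A)-\mathrm{rank}\,C$.

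To upgrade this to an equality I would produce a point of $\mathcal{F}(A)$ in the relative interior of $F$, that is, a doubly stochastic $S^{\ast}\le A$ with $s^{\ast}_{ij}>0$ for every $(i,j)\in E$: once such an $S^{\ast}$ is available, every point of $F$ close enough to it still satisfies all the inequality constraints, so $\mathcal{F}(A)$ contains a relatively open subset of $F$ and therefore affinely spans $F$, whence $\dim\mathcal{F}(A)=\dim F$. This is exactly the role of the total-support hypothesis: for each $(i,j)\in E$ choose a permutation matrix $P^{(i,j)}\le A$ with a $1$ in position $(i,j)$ and set $S^{\ast}=\sigma(A)^{-1}\sum_{(i,j)\in E}P^{(i,j)}$, which is doubly stochastic, satisfies $S^{\ast}\le A$, and has $s^{\ast}_{ij}\ge\sigma(A)^{-1}>0$ throughout $E$.

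It then remains to show $\mathrm{rank}\,C=2n-t$. The matrix $C$ is the unoriented vertex-edge incidence matrix of the bipartite graph $G(A)$ whose vertex classes are the $n$ rows and the $n$ columns of $A$ and whose edge set is $E$; multiplying the rows of $C$ indexed by the columns of $A$ by $-1$ turns it into an oriented incidence matrix, so $\mathrm{rank}\,C=2n-c$, where $c$ is the number of connected components of $G(A)$. On the other hand, the quoted characterization of total support gives permutation matrices with $PAQ=A_{1}\oplus\cdots\oplus A_{t}$, each $A_{i}$ fully indecomposable; permuting rows and columns does not change the number of components, $G(PAQ)$ is the disjoint union of the graphs $G(A_{i})$, and each $G(A_{i})$ is connected because a fully indecomposable matrix whose bipartite graph disconnects would be permutation-equivalent to a nontrivial direct sum. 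Hence $c=t$ and $\dim\mathcal{F}(A)=\sigma(A)-\mathrm{rank}\,C=\sigma(A)-2n+t$. (In passing, $\sigma(A)-2n+t=|E|-|V(G(A))|+c$ is the cycle rank of $G(A)$, which fits the intuition that one may perturb a doubly stochastic matrix along each independent cycle of its support and along no other direction.)

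The step I expect to be the genuine obstacle is the rank identity $\mathrm{rank}\,C=2n-t$, because that is the only place where full indecomposability, rather than the mere feasibility of some doubly stochastic matrix with pattern $A$, is really used, and it hinges on correctly matching the linear dependencies among the $2n$ line-sum equations with the component structure of $G(A)$. The remaining nonautomatic point, that $S^{\ast}$ really is a relative interior point of $\mathcal{F}(A)$ inside $F$, is handled by the openness of the strict inequalities along the flat $F$, as indicated above.
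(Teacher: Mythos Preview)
The paper does not prove this theorem at all; it is quoted verbatim as Corollary~2.6 of Brualdi--Gibson and used as a black box in the proof of Theorem~\ref{main}. Your write-up therefore cannot be compared with a proof in the paper, but it stands on its own as a correct, self-contained argument along the lines one finds in Brualdi--Gibson: identify the affine hull of $\mathcal{F}(A)$ with the solution set of the $2n$ line-sum equations in $\reals^{E}$, use total support to produce a strictly positive doubly stochastic point and thereby conclude $\dim\mathcal{F}(A)=\sigma(A)-\mathrm{rank}\,C$, and finish by recognising $C$ as the incidence matrix of the bipartite graph $G(A)$ so that $\mathrm{rank}\,C=2n-c$ with $c$ the number of components.

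One small point deserves a sentence of justification. When you assert that a fully indecomposable $A_i$ has connected bipartite graph, the implicit step is that a component of $G(A_i)$ must contain the same number of row-vertices as column-vertices; otherwise a disconnection of $G(A_i)$ would only give a block-\emph{triangular} (not block-diagonal) form after permutation. This balance holds because each $A_i$, as a summand of a matrix with total support, itself has total support and hence is $\ge$ some permutation matrix; that perfect matching forces every component of $G(A_i)$ to be balanced, and then your direct-sum contradiction goes through. With that remark added, the identification $c=t$ is airtight and the proof is complete.
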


%%%%%%%%%%%%%%%%%%%%%%%%%%%%%%%%%%%%%%%

\section{Main Results}

\begin{lemma}Let $D\in \omega_{n,k}$ and write $$D^{comp}=\begin{pmatrix}D  & X \\ Y & Z\end{pmatrix}$$ where $X,Y$ and $Z$ have dimension $n\times k, k\times n$ and $k\times k$ respectively.  Then \begin{enumerate}[label=(\roman*)]

\item $k-1 <\sigma(X)=\sigma(Y)\leq k;$
\item each column of $X$ contains at least one positive element and $c_i(X)=1$ for $i=1,\ldots,k-1;$
\item each row of $Y$ contains at least one positive element and $r_i(Y)=1$ for $i=1,\ldots,k-1.$
\end{enumerate}
\end{lemma}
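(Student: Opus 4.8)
The plan is to analyze the completion algorithm directly, tracking what happens to the appended entries $x_i, y_i$ through the iterations. First I would set up notation: let $k = sd(D)$, so by Theorem~\ref{thm2.1} we have $k = \lceil n - \sigma(D)\rceil$, and hence $n - k < \sigma(D) \le n - k + 1$. Since $\sigma(D^{comp}) = n + k$ (it is doubly stochastic of order $n+k$) and $\sigma(D^{comp}) = \sigma(D) + \sigma(X) + \sigma(Y) + \sigma(Z)$, the claim in (i) about $\sigma(X)$ and $\sigma(Y)$ will follow once I pin down $\sigma(X) = \sigma(Y)$ and the relation between $\sigma(X)+\sigma(Y)+\sigma(Z)$ and $k$. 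The symmetry $\sigma(X) = \sigma(Y)$ should come from the row/column symmetry of the algorithm: $x_i$ depends on the row sums exactly as $y_i$ depends on the column sums, and $\sigma(X) = \sum_i x_i + (\text{contributions from later appended columns})$. This bookkeeping is the heart of the argument.

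Next I would handle the single-step case first as a warm-up: if $k$ happens to equal $1$ after one application of step~1, then $X$ is the single column $(x_1,\dots,x_n)^T$ with $x_1 = 1 - r_1(D)$ and $x_i = \min\{1 - r_i(D), 1 - \sum_{j<i} x_j\}$, and $Y$ is the analogous single row; there are no interior columns so (ii) and (iii) are about the last (only) column/row, and $\sigma(X) = \sum x_i = n - \sigma(D) \in (k-1, k] = (0,1]$, with each $x_i > 0$ because $r_i(D) < 1$ for a doubly substochastic matrix that is not doubly stochastic—actually one must be slightly careful: $r_i(D)$ could equal $1$ for some rows, forcing $x_i = 0$, but then the ``min'' with $1 - \sum_{j<i}x_j$ still leaves room, so I'd argue that not all $x_i$ vanish. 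The general case then proceeds by induction on $k$: after one round, $\tilde D \in \omega_{n+1, k-1}$ (using the fact stated in the algorithm that $sd$ drops appropriately—wait, the excerpt says $\tilde B \in \omega_{n+1,k+2}$, so I need to re-read: the sub-defect as an integer $k$ stays, but $n$ grows by $1$; more precisely after one step the new matrix is $(n+1)\times(n+1)$ and still needs $k-1$ more completions since one unit-sum row and column were added). I would apply the inductive hypothesis to $\tilde D$, whose completion is the same $D^{comp}$, and track how the $n\times k$ block $X$ decomposes into its first column (the just-appended $x_i$'s, summing to $1$ when $k \ge 2$ because the step was ``full'') plus the $n \times (k-1)$ block coming from $\tilde D$'s completion.

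The key structural point for (ii) and (iii)—that interior columns of $X$ have sum exactly $1$—is precisely the sentence in the algorithm description: ``each row and column appended except the last row and the last column has sum $1$.'' So columns $1, \dots, k-1$ of the full $(n+k)\times(n+k)$ matrix $D^{comp}$, restricted to the block $X$, sum to $1$ provided the rows of $D$ and the earlier-appended rows that sit above them contribute $0$ in those columns—but they don't necessarily, so more care is needed: column $j$ of $D^{comp}$ (for $j$ an appended column, $j \le k-1$) sums to $1$ over all $n+k$ rows, and I must show the entries in rows $n+1, \dots, n+k$ of that column (i.e., in blocks $Y$ and $Z$) are all zero, so that the column sum of $X$ alone is $1$. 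This is where I expect the main obstacle: showing the appended rows $Y, Z$ are ``lower-triangular-ish'' relative to the appended columns, i.e., that the $j$-th appended column only receives mass from the original rows and not from appended rows with smaller index. I believe this follows from the order in which the algorithm fills things in (column $j$ is created at stage $j$, before rows $n+j, \dots$ exist), but making that rigorous—and simultaneously showing every column of $X$ is nonzero (the last column included)—will require carefully unwinding the recursion, and that is the step I would budget the most effort for. Statement (iii) is then immediate by transposing the whole argument.
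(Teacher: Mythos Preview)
Your plan would ultimately work, but you are making part (i) much harder than it needs to be, and you have the ceiling inequality reversed: from $k=\lceil n-\sigma(D)\rceil$ one gets $n-k\le\sigma(D)<n-k+1$, not the other way. The paper's argument for (i) then avoids any induction or algorithm-tracking entirely. Since $D$ and $X$ together constitute the first $n$ rows of the doubly stochastic matrix $D^{comp}$, their entry sums satisfy $\sigma(D)+\sigma(X)=n$; likewise $\sigma(D)+\sigma(Y)=n$ from the first $n$ columns. Hence $\sigma(X)=\sigma(Y)=n-\sigma(D)\in(k-1,k]$ in one line. You actually stumble onto $\sigma(X)=n-\sigma(D)$ in your $k=1$ warm-up, but then set up an induction instead of noticing that the same row-sum identity holds for every $k$.

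For (ii), your strategy of first showing that the appended columns are zero in the $Z$ block (a minor slip: the appended columns meet $X$ and $Z$, not $Y$ and $Z$) and then reading off $c_i(X)=1$ from the column sums of $D^{comp}$ would succeed, but it effectively proves Lemma~\ref{lmMain}(iii) as a subroutine. The paper argues more directly on $X$ alone: each $c_i(X)\le 1$ because $X$ sits inside a doubly stochastic matrix, and by construction the algorithm only leaves a column $t$ with $c_t(X)<1$ when every row of $D$ is already complete, which forces $c_{t+1}(X)=\cdots=c_k(X)=0$. If this happened for some $t<k$, then $\sigma(X)\le t\le k-1$, contradicting $\sigma(X)>k-1$ from (i). The same pigeonhole gives $c_i(X)>0$ for every $i$ (if one column were zero the remaining $k-1$ columns could total at most $k-1$). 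Part (iii) is then just the transpose, as you say.
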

\begin{proof} ($i$) Since $D\in \omega_{n,k},$ $n-k \leq  \sigma(D) < n-k+1.$ Since $D^{comp}\in \Omega_{n+k},$ $\sigma(D)+\sigma(X)=n$ because $D$ and $X$ partition the first $n$ rows of a doubly stochastic matrix $D^{comp}.$  Therefore
$$  k-1< \sigma(X)=n-\sigma(D)\leq k.$$ Similarly, $D$ and $Y$ partition the first $n$ columns of a doubly stochastic matrix $D^{comp},$ so $$k-1 <\sigma(X)=\sigma(Y)\leq k.$$

($ii$) Since $D\in \omega_{n,k},$ $n-k \leq \sigma(D)< n-k+1;$ Since $X$ is a submatrix of a doubly stochastic matrix $D^{comp},$ $c_i(X)\leq 1;$ Since $D$ and $X$ compose the first $n$ rows of a doubly stochastic matrix, $\sigma(D)+\sigma(X)=n$ and hence $\sigma(X)>k-1;$ Since $X$ has $k$ columns, $c_i(X)>0$ for all $i=1,2,\ldots,k.$ Suppose for some $1\leq t <k,$ $c_t(X)<1.$ Then all rows of $D$ have been completed, otherwise one should increase the values of some element in this column to complete those rows. Hence $c_i(X)=0$ for $i=t+1,t+2,\ldots, k$. Therefore $$\sigma(X)=\sum_{i=1}^t c_i(X)>k-1$$ which is impossible since there are only $t<k$ non-zero columns and all of them have sum at most $1.$

($iii$) Take the transpose of $D^{comp}$ to convert to ($ii$).
\end{proof}

\begin{lemma}\label{lmMain} Let $D\in \omega_{n,k}$ and write $$D^{comp}=\begin{pmatrix}D  & X \\ Y & Z\end{pmatrix}$$ where $X,Y$ and $Z$ have dimension $n\times k, k\times n$ and $k\times k$ respectively.  Then
\begin{enumerate}[label=(\roman*)]
\item $\sigma(\mathcal{B}(X))\leq n+k-1;$

\item $\sigma(\mathcal{B}(Y))\leq n+k-1;$

\item $\sigma(\mathcal{B}(Z))\leq 1.$
\end{enumerate}
and hence $\sigma(\mathcal{B}(D^{comp}))\leq \sigma(\mathcal{B}(D))+2(n+k)-1\leq n^2+2(n+k)-1.$
\end{lemma}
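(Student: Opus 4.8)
The plan is to follow the greedy completion algorithm and analyze it one appended column at a time. Recall that $D^{comp}$ is produced by $k$ iterations; iteration $j$ appends column $n+j$ and row $n+j$, the top $n$ entries of that column being precisely column $j$ of $X$. Thus the columns of $X$ are filled in the order $1,2,\dots,k$, and since no appended row ever meets the first $n$ rows, the partial sums of the first $n$ rows change only through $X$. For $l\le n$ set $\rho_l^{(j)}=r_l(D)+\sum_{m=1}^j X_{lm}$, so $\rho_l^{(0)}=r_l(D)$, $\rho_l^{(k)}=1$, and for a fixed column $j$ let $\gamma_l^{(j)}=\sum_{m=1}^l X_{mj}$. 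The construction rule reads $X_{lj}=\min\{\,1-\rho_l^{(j-1)},\ 1-\gamma_{l-1}^{(j)}\,\}$; hence $X_{lj}>0$ iff $\rho_l^{(j-1)}<1$ and $\gamma_{l-1}^{(j)}<1$, and in that case either $X_{lj}=1-\rho_l^{(j-1)}$, which makes $\rho_l^{(j)}=1$ (we say row $l$ is \emph{completed} at step $j$), or $X_{lj}=1-\gamma_{l-1}^{(j)}$, which forces $\gamma_l^{(j)}=1$ and zeroes out the remainder of column $j$.

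For (i), the crucial claim is that in each column every positive entry \emph{except possibly the bottommost one} completes its row. Indeed, reading column $j$ from top to bottom, once the running sum $\gamma$ reaches $1$ no further positive entries appear; so if a positive $X_{lj}$ is not the lowest in its column, then $\gamma$ is still below $1$ after it is placed, which means the defining minimum was attained (at worst as a tie) by $1-\rho_l^{(j-1)}$, giving $\rho_l^{(j)}=1$. For the last column this is even cleaner: $\rho_l^{(k)}=1$ for every $l$, so \emph{every} positive entry of column $k$ completes its row. A completed row receives $0$ in all later columns, so the sets $C_j=\{\text{rows completed at step }j\}$ are pairwise disjoint subsets of $\{1,\dots,n\}$. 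With $L_j=\{l:X_{lj}>0\}$, which is nonempty by the preceding lemma, we get $|C_j|\ge|L_j|-1$ for $j<k$ and $|C_k|\ge|L_k|$. Summing, $\big(\sum_{j=1}^{k}|L_j|\big)-(k-1)\le\sum_{j=1}^k|C_j|\le n$, that is, $\sigma(\mathcal B(X))=\sum_j|L_j|\le n+k-1$. Part (ii) is (i) applied to the transpose: the algorithm is symmetric under transposition, so $(D^{comp})^{T}=(D^{T})^{comp}$ and the roles of $X$ and $Y$ interchange; equivalently, repeat the argument scanning the rows of $Y$.

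Part (iii) is immediate from the preceding lemma: columns $n+1,\dots,n+k$ of $D^{comp}$ are $\begin{pmatrix}X\\Z\end{pmatrix}$ and rows $n+1,\dots,n+k$ are $\begin{pmatrix}Y&Z\end{pmatrix}$, so $c_i(Z)=1-c_i(X)$ and $r_i(Z)=1-r_i(Y)$; since $c_i(X)=r_i(Y)=1$ for $i=1,\dots,k-1$, the first $k-1$ rows and the first $k-1$ columns of $Z$ are zero, leaving at most the single entry $Z_{kk}$. Finally the blocks $D,X,Y,Z$ occupy disjoint positions, so $\sigma(\mathcal B(D^{comp}))=\sigma(\mathcal B(D))+\sigma(\mathcal B(X))+\sigma(\mathcal B(Y))+\sigma(\mathcal B(Z))\le\sigma(\mathcal B(D))+2(n+k)-1$, and $\sigma(\mathcal B(D))\le n^2$ finishes it.

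The part most prone to error is the bookkeeping in the first two paragraphs. One should check that the entries of column $j$ of $X$ really are governed by the recursion above using only $\rho_l^{(j-1)}$ — the appended-row entries of column $n+j$ lie strictly below row $n$ and so are produced only after all of column $j$ of $X$ — and one must dispatch the degenerate cases: rows already complete before step $j$ (they just get a $0$ and never enter any $C_j$), ties in the minimum (harmless; the row is still completed), and the last column possibly having sum strictly below $1$ (permitted, and exactly the situation in which the gain of ``$-1$'' is realized). Once these are in place the counting is routine.
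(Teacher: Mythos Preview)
Your proof is correct and follows essentially the same line as the paper's: both exploit the greedy fill of $X$ column by column, using that each positive entry either completes its row or its column. Where the paper packages the bound row-wise (the staircase picture gives each row of $X$ at most two positive entries, with at most $k-1$ rows having two), you count column-wise via the pairwise disjoint sets $C_j$ of rows newly completed at step $j$; the arithmetic $\sum_j|L_j|\le n+k-1$ is the same either way. Your handling of (ii) by transposition and of (iii) via the preceding lemma also matches the paper's argument.
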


\begin{proof}  ($i$) Denote $$X=\begin{pmatrix}x_{11} & x_{12} & \ldots & x_{1k} \\  x_{21} & x_{22} & \ldots & x_{2k} \\ \vdots & \vdots & \ddots & \vdots \\ x_{n1} & x_{n2} & \ldots & x_{nk} \end{pmatrix}.$$ Due to the way that $D^{comp},$ $x_{11}=1-r_1(D)$ is constructed and $x_{1j}=0$ for $j=2,3,\ldots,k$ since $D^{comp}\in \Omega_{n+k}.$ So $X$ must has the form
$$X=\begin{pmatrix}1-r_1(D) & 0 & \ldots & 0 \\  x_{21} & x_{22} & \ldots & x_{2k} \\ \vdots & \vdots & \ddots & \vdots \\ x_{n1} & x_{n2} & \ldots & x_{nk} \end{pmatrix}.$$ Unless the first row of $D$ is zero for which the first column of $X$ is zero except $x_{11}=1,$ we will choose $x_{21}=\min\{1-r_2(D), 1-x_{11}\}\geq 0.$ If $x_{21}=1-r_2(D)<1-x_{11},$ then the second row is completed and $x_{2j}=0$ for $j=2,3,\ldots,k.$ If $x_{21}=1-x_{11}<1-r_2(D),$ then the first column of $X$ is completed, so $x_{j1}=0$ for $j=3,4,\ldots, n.$ If $x_{21}=1-r_2(D)<1-x_{11},$ then both second row and the first column of $X$ are completed.  Namely, $X$ must have the forms
$$X=\begin{pmatrix}1-r_1(D) & 0 & \ldots & 0 \\  1-r_2(D) & 0 & \ldots & 0 \\ x_{31} & x_{32} & \ldots & x_{3k} \\ \vdots & \vdots & \ddots & \vdots \\ x_{n1} & x_{n2} & \ldots & x_{nk} \end{pmatrix}$$
for which $x_{31}$ will be chosen next; or
$$X=\begin{pmatrix}1-r_1(D) & 0 & \ldots & 0 \\  1-(1-r_1(D)) & x_{22} & \ldots & x_{2k} \\ 0 & x_{32} & \ldots & x_{3k} \\ \vdots & \vdots & \ddots & \vdots \\ 0 & x_{n2} & \ldots & x_{nk} \end{pmatrix}$$
for which $x_{22}$ will be chosen next; or
$$X=\begin{pmatrix}1-r_1(D) & 0 & \ldots & 0 \\  1-r_2(D)=1-(1-r_1(D)) & 0 & \ldots & 0 \\ 0 & x_{32} & \ldots & x_{3k} \\ \vdots & \vdots & \ddots & \vdots \\ 0 & x_{n2} & \ldots & x_{nk} \end{pmatrix}$$ for which $x_{32}$ will be chosen next.

In general, once $x_{st}$ is determined, it may complete the row in which case  $x_{s+1,t}$ is determined next; or it may complete the column in which case $x_{s,t+1}$ is determined next; or it may complete both the row and column in which case $x_{s+1,t+1}$ is determined next.

Therefore $X$ must look like
$$\begin{pmatrix} *  &   & & \\ * & * && \\ & * && \\ & * && \\ & * & * && \\ && *& *& \\ &&& * & \\&&& * & \\ &&& * & \\&&& * &\\&&& * &*\\ &&& &* \end{pmatrix}$$
where $*$ are non-negative entries and other entries are zero. Each row contains at most two positive entries and at most $k-1$ row contains two positive entries which implies $$\sigma(\mathcal{B}(X))\leq n+k-1.$$

($ii$) Take the transpose of $D^{comp},$ then it is converted to ($i$).

($iii$) Due to the way  $D^{comp}$ is constructed, once $x_{st}$ is determined, one determine $x_{s,t+1}$ only when the column containing $x_{st}$ is completed. This means that $\sum_{i=1}^s x_{it}=1,$ so $x_{jt}=0$ for $j=s+1,\ldots,n.$ Note that the first $k-1$ columns in $X$ must be completed implying that all the first $k-1$ columns in $Z$ are zero. Similarly, the first $k-1$ rows are zero due to the structure of $Y.$ Hence the only entry in $Z$ which can be positive is $Z_{kk}=D^{comp}_{n+k,n+k}.$ Namely $\sigma(\mathcal{B}(Z))\leq 1.$

 Combine all of $(i), (ii)$ and $(iii)$, we have $\sigma(\mathcal{B}(D^{comp}))=\sigma(\mathcal{B}(D))+\sigma(\mathcal{B}(X))+\sigma(\mathcal{B}(Y))+\sigma(\mathcal{B}(Z))\leq
 \sigma(\mathcal{B}(D))+2(n+k)-1\leq n^2+2(n+k)-1 .$

\end{proof}
The following example illustrates the procedure of obtaining $D^{comp}$ for a given doubly substochastic matrix $D$.

\begin{example}
Let $D=\begin{pmatrix}0.1 & 0 & 0.2 & 0.1 \\ 0 & 0.2 & 0.1 & 0 \\ 0.2 & 0 & 0 & 0.1 \\ 0.1 & 0.2 & 0.3 &0.2 \end{pmatrix}.$ Since $\sigma(D)=1.8,$ $D\in\omega_{4,3}$ and hence $$D^{comp}=\begin{pmatrix}0.1 & 0 & 0.2 & 0.1 & x_{11} & x_{12} & x_{13}\\ 0 & 0.2 & 0.1 & 0 & x_{21} & x_{22} & x_{23}\\ 0.3 & 0 & 0 & 0.1 & x_{31} & x_{32} & x_{33} \\ 0.1 & 0.2 & 0.3 &0.2 & x_{41} & x_{42} & x_{43} \\ y_{11} & y_{12} & y_{13} & y_{14} & z_{11} & z_{12} & z_{13}  \\ y_{21} & y_{22} & y_{23} & y_{24} & z_{21} & z_{22} & z_{23} \\ y_{31} & y_{32} & y_{33} & y_{34} & z_{31} & z_{32} & z_{33}\end{pmatrix} $$
We start by letting $x_{11}=1-r_1(D)=0.6.$ Since $x_{11}$ completes the first row, all elements on the right to $x_{11}$ must be $0.$
$$D^{comp}=\begin{pmatrix}0.1 & 0 & 0.2 & 0.1 & 0.6 & 0 & 0 \\ 0 & 0.2 & 0.1 & 0 & x_{21} & x_{22} & x_{23}\\ 0.2 & 0 & 0 & 0.1 & x_{31} & x_{32} & x_{33} \\ 0.1 & 0.2 & 0.3 &0.2 & x_{41} & x_{42} & x_{43} \\ y_{11} & y_{12} & y_{13} & y_{14} & z_{11} & z_{12} & z_{13}  \\ y_{21} & y_{22} & y_{23} & y_{24} & z_{21} & z_{22} & z_{23} \\ y_{31} & y_{32} & y_{33} & y_{34} & z_{31} & z_{32} & z_{33}\end{pmatrix} $$
Next, let $x_{21}=\min\{1-r_2{D},1-x_{11}\}=\min \{  0.7 ,0.4\}=0.4$ which completes the fourth column of $D^{comp},$  so all elements below $x_{21}$ must be $0.$
$$D^{comp}=\begin{pmatrix}0.1 & 0 & 0.2 & 0.1 & 0.6 & 0 & 0 \\ 0 & 0.2 & 0.1 & 0 & 0.4 & x_{22} & x_{23}\\ 0.2 & 0 & 0 & 0.1 & 0 & x_{32} & x_{33} \\ 0.1 & 0.2 & 0.3 &0.2 & 0 & x_{42} & x_{43} \\ y_{11} & y_{12} & y_{13} & y_{14} & 0 & z_{12} & z_{13}  \\ y_{21} & y_{22} & y_{23} & y_{24} & 0 & z_{22} & z_{23} \\ y_{31} & y_{32} & y_{33} & y_{34} & 0 & z_{32} & z_{33}\end{pmatrix} $$
Next, let $x_{22}=1-r_2(D)-x_{21}=0.3$ which completes the second row and hence $x_{23}=0.$

$$D^{comp}=\begin{pmatrix}0.1 & 0 & 0.2 & 0.1 & 0.6 & 0 & 0 \\ 0 & 0.2 & 0.1 & 0 & 0.4 & 0.3 & 0 \\ 0.2 & 0 & 0 & 0.1 & 0 & x_{32} & x_{33} \\ 0.1 & 0.2 & 0.3 &0.2 & 0 & x_{42} & x_{43} \\ y_{11} & y_{12} & y_{13} & y_{14} & 0 & z_{12} & z_{13}  \\ y_{21} & y_{22} & y_{23} & y_{24} & 0 & z_{22} & z_{23} \\ y_{31} & y_{32} & y_{33} & y_{34} & 0 & z_{32} & z_{33}\end{pmatrix} $$
Next, let $x_{32}=\min\{1-r_3(D)-x_{31}, 1-x_{12}-x_{22}\}=0.7$ which completes both the third row and the fifth column, so all elements below and on the right to $x_{32}$  must be $0.$

  $$D^{comp}=\begin{pmatrix}0.1 & 0 & 0.2 & 0.1 & 0.6 & 0 & 0 \\ 0 & 0.2 & 0.1 & 0 & 0.4 & 0.3 & 0 \\ 0.2 & 0 & 0 & 0.1 & 0 & 0.7 & 0 \\ 0.1 & 0.2 & 0.3 &0.2 & 0 & 0 & x_{43} \\ y_{11} & y_{12} & y_{13} & y_{14} & 0 & 0 & z_{13}  \\ y_{21} & y_{22} & y_{23} & y_{24} & 0 & 0 & z_{23} \\ y_{31} & y_{32} & y_{33} & y_{34} & 0 & 0 & z_{33}\end{pmatrix} $$
Next, let $x_{43}=1-r_4(D)=0.2$ to complete the fourth row.

  $$D^{comp}=\begin{pmatrix}0.1 & 0 & 0.2 & 0.1 & 0.6 & 0 & 0 \\ 0 & 0.2 & 0.1 & 0 & 0.4 & 0.3 & 0 \\ 0.2 & 0 & 0 & 0.1 & 0 & 0.7 & 0 \\ 0.1 & 0.2 & 0.3 &0.2 & 0 & 0 & 0.2 \\ y_{11} & y_{12} & y_{13} & y_{14} & 0 & 0 & z_{13}  \\ y_{21} & y_{22} & y_{23} & y_{24} & 0 & 0 & z_{23} \\ y_{31} & y_{32} & y_{33} & y_{34} & 0 & 0 & z_{33}\end{pmatrix} $$

  Next, take the transpose of $D^{comp}$ to obtain $Y$ and $Z$ by the same procedure and then take the transpose again to obtain $D^{comp}$ completely.
$$D^{comp}=\begin{pmatrix}0.1 & 0 & 0.2 & 0.1 & 0.6 & 0 & 0 \\ 0 & 0.2 & 0.1 & 0 & 0.4 & 0.3 & 0 \\ 0.2 & 0 & 0 & 0.1 & 0 & 0.7 & 0 \\ 0.1 & 0.2 & 0.3 &0.2 & 0 & 0 & 0.2 \\ 0.6 & 0.4 & 0 & 0 & 0 & 0 & 0  \\ 0 & 0.2 & 0.4 & 0.4 & 0 & 0 & 0 \\ 0 & 0 & 0 & 0.2 & 0 & 0 & 0.8\end{pmatrix} $$

\end{example}
\begin{theorem} \label{main} Let $A\in \omega_{n,k},$ then $A$ can be written as a convex combination of no more than
$$\sigma(\mathcal{B}(A))+t$$ subpermutation matrices, where $t$ is the number of fully indecomposable components of $A^{comp}.$
\end{theorem}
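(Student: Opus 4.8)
The plan is to reduce the substochastic problem to the doubly stochastic case via the completion $A^{comp}$ and then apply the Brualdi--Gibson dimension formula together with Carath\'eodory's theorem. First I would note that $A \in \omega_{n,k}$ sits as the leading $n\times n$ principal submatrix of the doubly stochastic matrix $A^{comp}\in\Omega_{n+k}$, and that $\mathcal{B}(A^{comp})$ has total support since every doubly stochastic matrix's pattern does (Birkhoff's theorem, as recalled in the excerpt). Hence $A^{comp}$ lies in the face $\mathcal{F}(\mathcal{B}(A^{comp}))$ of $\Omega_{n+k}$, whose dimension, by Theorem~\ref{BThm}, is $\sigma(\mathcal{B}(A^{comp})) - 2(n+k) + t$, with $t$ the number of fully indecomposable components of (a direct-sum rearrangement of) $A^{comp}$. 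By Carath\'eodory's theorem, $A^{comp}$ is a convex combination of at most $\sigma(\mathcal{B}(A^{comp})) - 2(n+k) + t + 1$ vertices of that face, i.e.\ permutation matrices $P_1,\dots,P_m$ of size $n+k$ with $P_i \le \mathcal{B}(A^{comp})$, where $m \le \sigma(\mathcal{B}(A^{comp})) - 2(n+k) + t + 1$.

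Next I would restrict each $P_i$ to its leading $n\times n$ block: write $A^{comp} = \sum_i \lambda_i P_i$ and take the top-left $n\times n$ submatrix of both sides, obtaining $A = \sum_i \lambda_i Q_i$ where $Q_i$ is the $n\times n$ principal submatrix of $P_i$. Since $P_i$ is a permutation matrix, $Q_i$ has at most one $1$ in each row and each column, so $Q_i$ is a subpermutation matrix. This already expresses $A$ as a convex combination of at most $m$ subpermutation matrices, but the count $\sigma(\mathcal{B}(A^{comp})) - 2(n+k) + t + 1$ must now be converted into the claimed bound $\sigma(\mathcal{B}(A)) + t$. The bridge is the key inequality from Lemma~\ref{lmMain}: $\sigma(\mathcal{B}(A^{comp})) \le \sigma(\mathcal{B}(A)) + 2(n+k) - 1$. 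Substituting this into the Carath\'eodory bound gives
\[
m \;\le\; \sigma(\mathcal{B}(A^{comp})) - 2(n+k) + t + 1 \;\le\; \bigl(\sigma(\mathcal{B}(A)) + 2(n+k) - 1\bigr) - 2(n+k) + t + 1 \;=\; \sigma(\mathcal{B}(A)) + t,
\]
which is exactly the desired bound.

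I expect the main subtlety to lie not in any single hard computation but in making sure the two ingredients are legitimately applicable in tandem. In particular, one must confirm that the number $t$ appearing in the final statement—``the number of fully indecomposable components of $A^{comp}$''—is the \emph{same} $t$ that appears in the Brualdi--Gibson formula for $\dim\mathcal{F}(\mathcal{B}(A^{comp}))$; since $\mathcal{B}(A^{comp})$ has total support, Theorem~3 gives permutation matrices $P,Q$ with $P\,\mathcal{B}(A^{comp})\,Q = A_1\oplus\cdots\oplus A_t$, and this $t$ is precisely the number of fully indecomposable components, so the identification is clean. One should also double-check the degenerate cases: if $A^{comp}$ is already a permutation matrix, or if $k=0$ (so $A\in\Omega_n$), the argument specializes correctly and the bound still holds. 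The restriction-to-principal-submatrix step is routine linear algebra, and the convex-combination coefficients are inherited verbatim, so no genuine obstacle remains once the dimension formula and Lemma~\ref{lmMain} are invoked in the right order.
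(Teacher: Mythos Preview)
Your proposal is correct and follows essentially the same approach as the paper: apply the Brualdi--Gibson dimension formula (Theorem~\ref{BThm}) to the face $\mathcal{F}(\mathcal{B}(A^{comp}))$, invoke Carath\'eodory to bound the number of permutation matrices needed for $A^{comp}$, use Lemma~\ref{lmMain} to convert $\sigma(\mathcal{B}(A^{comp}))$ into $\sigma(\mathcal{B}(A))$, and finally restrict to the leading $n\times n$ block. The paper's proof is terser and leaves the restriction step and the identification of $t$ implicit, but your more careful spelling-out of these points is entirely in line with the intended argument.
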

\begin{proof} Since $A^{comp}\in \Omega_{n+k},$ $$\dim \mathcal{F}(\mathcal{B}(A^{comp}))=\sigma(\mathcal{B}(A^{comp}))-2(n+k)+t$$ due to Theorem \ref{BThm}. Apply Lemma \ref{lmMain}, we have
 $$\dim \mathcal{F}(\mathcal{B}(A^{comp}))=\sigma(\mathcal{B}(A^{comp}))-2(n+k)+t\leq \sigma(\mathcal{B}(A))+t-1,$$ meaning that $A^{comp}$ can be written as a convex combination of no more than  $$\sigma(\mathcal{B}(A))+t$$ permutation matrices. Therefore $A$ can be written as a convex combination of no more than $\sigma(\mathcal{B}(A))+t$ subpermutation matrices.
\end{proof}

The bound given by Theorem \ref{main} is actually tight. Here is an example in which the equality holds.
\begin{example} \label{ex1}

Let $A=\begin{pmatrix} \frac{7}{12} & 0 \\ \frac{1}{6} & \frac{1}{2} \end{pmatrix}\in \omega_{2,1}$ and its minimal completion is $$A^{comp}=\begin{pmatrix} \frac{7}{12} & 0& \frac{5}{12} \\ \frac{1}{6} & \frac{1}{2} & \frac{1}{3} \\ \frac{1}{4}& \frac{1}{2}&\frac{1}{4} \end{pmatrix} \in \Omega_3$$ and $$A^{comp}=\frac{1}{6}\cdot\begin{pmatrix}0 & 0 & 1 \\ 1 & 0 & 0 \\ 0 & 1 & 0 \end{pmatrix}+\frac{1}{4}\cdot\begin{pmatrix}1 & 0 & 0 \\ 0 & 1 & 0 \\ 0 & 0 & 1 \end{pmatrix}+\frac{1}{4}\cdot\begin{pmatrix}0 & 0 & 1 \\ 0 & 1 & 0 \\ 1 & 0 & 0 \end{pmatrix} +\frac{1}{3}\cdot\begin{pmatrix}1 & 0 & 0 \\ 0 & 0 & 1 \\ 0 & 1 & 0 \end{pmatrix}$$ and hence a convex combination of subpermutation matrices of $A$ is
\begin{equation}\label{convex}A=\frac{1}{6}\cdot\begin{pmatrix}0 & 0  \\ 1 & 0  \end{pmatrix}+\frac{1}{4}\cdot\begin{pmatrix}1 & 0  \\ 0 & 1 \end{pmatrix} +\frac{1}{4}\cdot\begin{pmatrix}0 & 0  \\ 0 & 1  \end{pmatrix} +\frac{1}{3}\cdot\begin{pmatrix}1 & 0  \\ 0 & 0 \end{pmatrix}.\end{equation}

In addition, suppose that \begin{eqnarray} \nonumber A&=&x_1\begin{pmatrix}1 & 0  \\ 0 & 0  \end{pmatrix}+x_2\cdot\begin{pmatrix}0 & 1  \\ 0 & 0 \end{pmatrix} +x_3\begin{pmatrix}0 & 0  \\ 1 & 0  \end{pmatrix} +x_4\cdot\begin{pmatrix}0 & 0  \\ 0 & 1 \end{pmatrix} \\
\nonumber &+&x_5\begin{pmatrix}1 & 0  \\ 0 & 1  \end{pmatrix}+x_6\cdot\begin{pmatrix}0 & 1  \\ 1 & 0 \end{pmatrix} +x_7\begin{pmatrix}0 & 0  \\ 0 & 0  \end{pmatrix}\\
\nonumber &=& \begin{pmatrix}x_1+x_5 & x_2+x_6 \\ x_3+x_6 & x_4+x_5  \end{pmatrix}.
\end{eqnarray}
With $\sum_{i=1}^7 x_i=1,$ we have a system corresponding to the augmented matrix
$$\begin{pmatrix}1 & 0 & 0 & 0 & 1 & 0 & 0 & \frac{7}{12} \\ 0 & 1 & 0 & 0 & 0 & 1 & 0 & 0 \\ 0 & 0 & 1 & 0 & 0 & 1 & 0 & \frac{1}{6} \\ 0 & 0 & 0 & 1 & 1 & 0 & 0 & \frac{1}{2}\\ 1 & 1 & 1 & 1 & 1 & 1 & 1 & 1\end{pmatrix}$$ which is equivalent to $$\begin{pmatrix}1 & 0 & 0 & 0 & 0 & -1 & 1 & \frac{5}{6} \\ 0 & 1 & 0 & 0 & 0 & 1 & 0 & 0 \\ 0 & 0 & 1 & 0 & 0 & 1 & 0 & \frac{1}{6} \\ 0 & 0 & 0 & 1 & 0 & -1 & 1 & \frac{3}{4}\\ 0 & 0 & 0 & 0 & 1& 1 & -1 & \frac{1}{4}\end{pmatrix}.$$
Each row has a pivot position, so $x_6$ and $x_7$ are free variables. Since $x_i\geq 0$ for $i=1,\ldots,7,$ the second row implies that $x_2=x_6=0.$ The solution to this system is that

  $$\left\{
\begin{aligned}
x_1&=\frac{5}{6}-x_7\\
x_2&=0\\
x_3&=\frac{1}{6}\\
x_4&=\frac{3}{4}-x_7\\
x_5&=\frac{1}{4}+x_7\\
x_6&=0\\
x_7&=x_7
\end{aligned}
\right.$$
It is clear that $x_3 \neq 0$ and one can choose $x_7$ to make one of $x_1,x_4,x_5$ and $x_7$ to be zero.Therefore at least four of these seven coefficients are non-zero. If one let $x_7=0,$ then this specific solution gives (\ref{convex}).

According to Theorem \ref{main}, $A$ contains $3$ non-zero elements and $A^{comp}$ is not decomposable meaning that $A^{comp}$ is $1$ fully indecomposable component, so $A$ can be written as a convex combination of no more than $4$ subpermutation matrices while at least $4$  subpermutation matrices are needed as we showed above, so the equality holds.
\end{example}

Although Theorem \ref{main} is proven via minimal doubly stochastic completion of doubly substochastic matrices, the upper bound of the number of subpermutation matrices needed does not depend on the sub-defect at all. Let $B=[b_{ij}]\in \omega_n$ and $b=\max_{i}\{r_i(B)\}.$ Note that $\frac{1}{b}B\in\omega_n.$ If $b<1,$ then the sub-defect of $\frac{1}{b}B$ is possibly much less than the sub-defect of $B.$ But $\frac{1}{b}B$ needs the same number of subpermutation matrices or at most one more, the zero matrix, as $B$. If there exists a $b_{ij}=1,$ then one can permute $B$ as a direct sum of an identity matrix and a smaller doubly substochastic matrix for which one can play the trick to the doubly substochastic matrix. So sub-defect does not affect the number of subpermutation matrices needed. Here are two examples.

\begin{example}
Let $A=\begin{pmatrix} \frac{7}{12} & 0 \\ \frac{1}{6} & \frac{1}{2} \end{pmatrix}$ be the same matrix in Example \ref{ex1} and let $$B=\frac{1}{3}A=\frac{1}{3}\cdot \begin{pmatrix} \frac{7}{12} & 0 \\ \frac{1}{6} & \frac{1}{2} \end{pmatrix}=\begin{pmatrix} \frac{7}{36} & 0 \\ \frac{1}{18} & \frac{1}{6} \end{pmatrix}.$$ Although $sd(A)=1\neq 2=sd(B),$ since $$A=\frac{1}{6}\cdot\begin{pmatrix}0 & 0  \\ 1 & 0  \end{pmatrix}+\frac{1}{4}\cdot\begin{pmatrix}1 & 0  \\ 0 & 1 \end{pmatrix} +\frac{1}{4}\cdot\begin{pmatrix}0 & 0  \\ 0 & 1  \end{pmatrix} +\frac{1}{3}\cdot\begin{pmatrix}1 & 0  \\ 0 & 0 \end{pmatrix},$$
$$B=\frac{1}{3}A=\frac{1}{18}\cdot\begin{pmatrix}0 & 0  \\ 1 & 0  \end{pmatrix}+\frac{1}{12}\cdot\begin{pmatrix}1 & 0  \\ 0 & 1 \end{pmatrix} +\frac{1}{12}\cdot\begin{pmatrix}0 & 0  \\ 0 & 1  \end{pmatrix} +\frac{1}{9}\cdot\begin{pmatrix}1 & 0  \\ 0 & 0 \end{pmatrix}+\frac{1}{3}\cdot\begin{pmatrix}0 & 0  \\ 0 & 0 \end{pmatrix}.$$
Therefore, if two doubly substochastic matrices are scalar multiple of each other, the difference between the numbers of subpermutation matrices needed for their convex expansion is at most $1.$

\end{example}

Let $B=[b_{ij}]\in \omega_n.$ If $\max\{b_{ij}\}_{i,j=1}^n=1,$ then there exist permutation matrices $P$ and $Q$ such that $PBQ=I_k\oplus \tilde{B}$ where $I_k$ is a $k\times k$ identity for some $k\leq n.$  Hence $\tilde{B}$ and $B$ need the same number of subpermutation matrices for their convex expansions.
\begin{example}
Let Let $A=\begin{pmatrix} \frac{7}{12} & 0 \\ \frac{1}{6} & \frac{1}{2} \end{pmatrix}$ be the same matrix in Example \ref{ex1} and let $$B=I_2\oplus A=\begin{pmatrix}1 & 0 & & \\ 0& 1 && \\ & & \frac{7}{12} & 0 \\  &&  \frac{1}{6} & \frac{1}{2}\end{pmatrix}.$$ Since
$$A=\frac{1}{6}\cdot\begin{pmatrix}0 & 0  \\ 1 & 0  \end{pmatrix}+\frac{1}{4}\cdot\begin{pmatrix}1 & 0  \\ 0 & 1 \end{pmatrix} +\frac{1}{4}\cdot\begin{pmatrix}0 & 0  \\ 0 & 1  \end{pmatrix} +\frac{1}{3}\cdot\begin{pmatrix}1 & 0  \\ 0 & 0 \end{pmatrix},$$
$$B=\frac{1}{6}\cdot\begin{pmatrix}1& 0 & & \\ 0 & 1 & & \\ &&0 & 0  \\&& 1 & 0  \end{pmatrix}+\frac{1}{4}\cdot\begin{pmatrix}1& 0 & & \\ 0 & 1 & & \\&&1 & 0  \\&& 0 & 1 \end{pmatrix} +\frac{1}{4}\cdot\begin{pmatrix}1& 0 & & \\ 0 & 1 & & \\&&0 & 0  \\ &&0 & 1  \end{pmatrix} +\frac{1}{3}\cdot\begin{pmatrix}1& 0 & & \\ 0 & 1 & & \\ & & 1 & 0  \\ & & 0 & 0 \end{pmatrix}.$$

\end{example}

\section{Acknowledgement}
The author would thank the anonymous reviewers for critically reading the manuscript and suggesting substantial improvements.

\end{document}